\newtheorem{theorem}[subsection]{Theorem} 
\newtheorem{lemma}[subsection]{Lemma}
\newtheorem{corollary}[subsection]{Corollary}
\newtheorem{remark}[subsection]{Remark}
\def\al{\alpha}
\def\de{\delta}
\def\eps{\epsilon}
\def\vphi{\varphi}
\def\De{\Delta}
\def\supp{\mathrm{supp}}
\def\iy{\infty}
\def\pa{\partial} 
\newcommand{\cal}{\mathcal}
\newcommand{\la}{\langle}
\newcommand{\ra}{\rangle}
\newcommand{\nd}{\noindent}
\newcommand{\vs}{\vspace}
\newcommand{\n}{\newline}
\newcommand{\hB}{\hfill$\Box$}
\newcommand{\hr}{\hookrightarrow}
\newcommand{\R}{\mathbb{R}}
\def\sideremark#1{\ifvmode\leavevmode\fi\vadjust{\vbox to0pt{\vss
 \hbox to 0pt{\hskip\hsize\hskip1em
\vbox{\hsize2cm\tiny\raggedright\pretolerance10000
 \noindent #1\hfill}\hss}\vbox to8pt{\vfil}\vss}}}%
\begin{document}
\title[Time decay for Schr\"odinger equation] 
{Time decay for  Schr\"odinger equation with rough potentials} 
\author{Shijun Zheng}
\address[]{Department of Mathematical Sciences\\
                   Georgia Southern University\\
                   Statesboro, GA 30460-8093 \\
                   USA} 
\email{szheng@georgiasouthern.edu}
 \urladdr[]{http://math.georgiasouthern.edu/~szheng}

\keywords{functional calculus, Schr\"odinger operator, Littlewood-Paley theory}
\subjclass[2000]{Primary: 35J10; 
Secondary: 42B25}
\date{September 30, 2007}

\begin{abstract}
We obtain certain time decay and regularity estimates for 
3D Schr\"odinger equation with a potential in the Kato class by using Besov spaces 
associated with Schr\"odinger operators.
\end{abstract}

\maketitle 

\section{Introduction}\label{S1}  

The Schr\"odinger equation $iu_t=-\De u$ 
describes the waves of a free particle in a non-relativistic setting.  
It is physically important to consider a perturbed dispersive 
system in the presence of interaction between fields. 

Let $H=-\De+V$, where $\De$ is the Laplacian and $V$ is a real-valued function on $\R^n$. 
In this note we are concerned with the time decay of  Schr\"odinger equation with a potential 
\begin{align*}
&iu_t=H u, \\
&u(x,0)=u_0 ,\notag
\end{align*}
where the solution is given by $u(x,t)=e^{-itH}u_0$.
For simple exposition we consider the three dimensional case for $V$ in the Kato class \cite{Si82,JN94}.
Recall that $V$ is said to be in the {\em Kato class} $K_n$, $n\ge 3$ provided
\begin{align*}
\lim_{\de\to 0+} \sup_{x\in \R^n}\int_{|x-y|<\de} \frac{|V(y) |}{|x-y|^{n-2}}dy=0 . 
\end{align*}
Throughout this article we assume that $V=V_+-V_-$, $V_\pm\ge 0$ so that 
$V_+\in K_{n,loc}$ and $V_-\in K_n$, where 
$V\in K_{n,loc}$ if and only if $V\chi_{B}\in K_n$ for any characteristic function $\chi_B$ of the balls 
$B$ centered at $0$ 
in $\R^n$. 

We seek to find minimal smoothness condition on the initial data $u_0=f$ so that $u(x,t)$
has certain global time decay and regularity estimates. The idea is to combine the results of Jensen-Nakamura and
Rodnianski-Schlag \cite{JN94,RS04}
for short and long time decay by using 
Besov space method. 

In \cite{BS93,JN94,DP05,OZ06,Z06a}  several authors introduced and studied the 
Besov spaces and Triebel-Lizorkin spaces associated with $H$.  
Let $\{\varphi_j\}_{j=0}^\iy\subset C_0^\infty ({\R}) $ be a dyadic system satisfying 
\begin{enumerate} 
\item[(i)]  $\supp \,\vphi_0 \subset \{ x: |x|\le 1\}$, $\supp\, \varphi_j
\subset \{ x: 2^{j-2}\le |x|\le 2^{j}\} $, $j\ge 1$,
\item[(ii)] $|\vphi_j^{(k)}(x)|\le c_k 2^{-kj}\, ,    \qquad \forall k\ge 0, j\ge 0$,
\item[(iii)]  $\displaystyle{\sum_{j=0}^\infty |\varphi_j(x)| = 1, \quad   \forall x .}$
\end{enumerate} 
Let $\alpha\in \R$,  $1\le p\le \infty, 1\le q\le \infty$. The (inhomogeneous)  
\emph{Besov space} \emph{associated with} $H$, denoted by ${B}_p^{\alpha,q}(H)$, %
is defined to be the completion of $\mathcal{S}(\R^n)$, the Schwartz class, with respect to  
the norm 
$$ \Vert f\Vert_{{B}_p^{\alpha,q}(H) } = 
\big(\sum_{j=0}^{\infty}
 2^{j\alpha q} \Vert \vphi_j(H)f \Vert_{L^p}^q \big)^{1/q}\, .
$$

Similarly, the (inhomogeneous) {\em Triebel-Lizorkin space associated with $H$}, denoted by ${F}_p^{\alpha,q}(H)$, $\alpha\in \R$,  $1\le p< \infty, 
1\le q\le \infty$ is defined by the norm
\begin{equation*}\label{eq:F-norm}
\Vert f\Vert_{{F}_p^{\alpha,q}(H)} 
=\Vert \big(\sum_{j=0}^{\infty} 2^{j\alpha q} \vert \vphi_j(H)f \vert^q\big)^{1/q}\Vert_{L^p} \, .
\end{equation*} 


The main result is the following theorem. 
Let $\Vert V\Vert_K$ denote the 
Kato norm 
$$ \Vert V\Vert_{K} := \sup_{x\in \R^3}\int_{\R^3} \frac{|V(y) |}{|x-y|}dy . $$
 Let $\beta: =\beta(p)=n\vert \frac{1}{p}-\frac{1}{2}\vert$ be the critical exponent.
\begin{theorem}\label{th:global-V} Let $1\le p\le 2$.  Suppose 
$V\in K_n$, $n=3$ so that $\Vert V\Vert_K<4\pi$ and
\begin{align}\label{e:rollnik-4pi-square}
\int_{\R^6} \frac{|V(x)| \,|V(y) |}{|x-y|^2}dxdy <(4\pi)^2 . \end{align}
The following statements hold.
a) If $0<t\le 1$,  then 
\begin{equation}\label{e:e-Lp-tB-short-time}
\Vert e^{-itH}f \Vert_{p'}\lesssim \Vert f\Vert_{p'}  +  t^\beta \Vert f \Vert_{B^{\beta,1}_{p'}(H)} .
\end{equation}
b) If in addition, $|\pa_x^\al V(x)|\le c_\al$, $|\al|\le 2n$, $n=3$, 
then for all $t>0$
\begin{equation}\label{e:u-B-Lp}
\Vert  e^{-itH}f\Vert_{L^{p'}} \lesssim \la t \ra^{-n(\frac{1}{p}-\frac{1}{2})} \Vert  f\Vert_{B^{2\beta,1}_p(H)} ,
\end{equation}
where $p'=p/(p-1)$ is the conjugate of $p$ and $\langle t\rangle=(1+t^2)^{1/2}$.
\end{theorem}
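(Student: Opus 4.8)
The plan is to reduce both estimates to frequency-localized operator bounds for $e^{-itH}\vphi_j(H)$, obtained by combining the $L^p$ functional-calculus estimates of Jensen--Nakamura \cite{JN94} with the dispersive bound of Rodnianski--Schlag \cite{RS04}, and then to resum the dyadic pieces against the appropriate Besov norm. Throughout I would first record the spectral consequences of the hypotheses: the smallness $\Vert V\Vert_K<4\pi$ together with \eqref{e:rollnik-4pi-square} (the Rollnik norm being below $4\pi$) forces the Birman--Schwinger operator to have norm below $1$, so $H=-\De+V\ge 0$ has purely absolutely continuous spectrum with no eigenvalues and no zero-energy resonance, whence $P_c=I$. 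Combined with the Kato-class condition, these also yield Gaussian bounds for the heat kernel $e^{-tH}$ \cite{Si82}, hence a spectral multiplier theorem of Mikhlin--H\"ormander type and the uniform bounds $\Vert\vphi_j(H)\Vert_{L^r\to L^r}\lesssim 1$ for all $1\le r\le\iy$ and all $j$.

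For part (a) the engine is the Jensen--Nakamura estimate \cite{JN94}
\begin{equation*}
\Vert e^{-itH}\vphi_j(H)\Vert_{L^{p'}\to L^{p'}}\lesssim \la t2^j\ra^{\,\beta},\qquad \beta=n\Big\vert\tfrac1p-\tfrac12\Big\vert .
\end{equation*}
I would split $e^{-itH}f=e^{-itH}P_{\le k}f+\sum_{j>k}e^{-itH}\vphi_j(H)f$ with $k\simeq\log_2(1/t)$ chosen so that $t2^k\simeq 1$. On the low-frequency block the operator $e^{-itH}P_{\le k}$ equals $\psi(tH)$ for the symbol $\psi(u)=e^{-iu}\big(\sum_{j\le k}\vphi_j\big)(u/t)$, which is supported in $|u|\lesssim 1$ with all derivatives bounded uniformly in $t$; the multiplier theorem then gives $\Vert e^{-itH}P_{\le k}\Vert_{L^{p'}\to L^{p'}}\lesssim 1$, so this block contributes $\lesssim\Vert f\Vert_{p'}$. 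On the high-frequency tail $t2^j>1$ one has $\la t2^j\ra^\beta\lesssim t^\beta 2^{j\beta}$, and summing the Jensen--Nakamura bound over $j>k$ produces $t^\beta\sum_j 2^{j\beta}\Vert\vphi_j(H)f\Vert_{p'}=t^\beta\Vert f\Vert_{B^{\beta,1}_{p'}(H)}$, which is \eqref{e:e-Lp-tB-short-time}. The one genuinely delicate point is the low-frequency block: a crude triangle inequality over $j\le k$ would only yield the larger norm $\Vert f\Vert_{B^{0,1}_{p'}(H)}$, so it is essential to keep $e^{-itH}P_{\le k}$ intact and invoke the multiplier theorem for the single grouped operator.

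For part (b) the extra hypothesis $|\pa_x^\al V|\le c_\al$, $|\al|\le 2n$, is used only to bring in the Rodnianski--Schlag dispersive estimate $\Vert e^{-itH}\Vert_{L^1\to L^\iy}\lesssim t^{-n/2}$ \cite{RS04}, valid for all $t$ since $P_c=I$. Composing with $\Vert\vphi_j(H)\Vert_{L^1\to L^1}\lesssim 1$ and combining with the short-time localization bound $\Vert e^{-itH}\vphi_j(H)\Vert_{L^1\to L^\iy}\lesssim 2^{jn/2}$ from the Gaussian kernel estimate, I obtain $\Vert e^{-itH}\vphi_j(H)\Vert_{L^1\to L^\iy}\lesssim\min(2^{jn/2},t^{-n/2})$; interpolating against the trivial $L^2\to L^2$ bound gives
\begin{equation*}
\Vert e^{-itH}\vphi_j(H)\Vert_{L^p\to L^{p'}}\lesssim\min\big(2^{j\beta},t^{-\beta}\big).
\end{equation*}
Finally I would sum over $j$ after the elementary inequality $\min(2^{j\beta},t^{-\beta})\lesssim\la t\ra^{-\beta}2^{2j\beta}$, checked separately for $t\le 1$ and $t\ge 1$ using $j\ge 0$, which yields $\Vert e^{-itH}f\Vert_{p'}\lesssim\la t\ra^{-\beta}\sum_j 2^{2j\beta}\Vert\vphi_j(H)f\Vert_p=\la t\ra^{-\beta}\Vert f\Vert_{B^{2\beta,1}_p(H)}$, namely \eqref{e:u-B-Lp}; the choice of regularity $2\beta$ is exactly the embedding $B^{2\beta,1}_p(H)\hookrightarrow L^{p'}$ that keeps the small-$t$ end of $\la t\ra^{-\beta}$ consistent. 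The main obstacle in (b) is securing the localized dispersive bound for the perturbed propagator, i.e. verifying that the Rodnianski--Schlag estimate is available under the stated regularity of $V$ and the absence of bound states; once that is in hand the resummation is routine.
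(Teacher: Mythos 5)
Part (a) of your proposal is essentially the paper's own proof: the same dyadic splitting at $t2^j\simeq 1$, the same Jensen--Nakamura estimate (Lemma \ref{l:phi-et-beta}) on the high-frequency tail, and the same key observation that the low-frequency block must be kept grouped as a single operator rather than summed termwise. The only difference is cosmetic: where you invoke a Mikhlin--H\"ormander spectral multiplier theorem for $e^{-itH}P_{\le k}=\psi(tH)$, the paper simply applies Lemma \ref{l:phi-et-beta} once more, viewing the grouped block as $\eta(2^{-j_t}H)e^{-itH}$ with $t2^{j_t}\simeq 1$, so that the lemma's factor $\la t2^{j_t}\ra^\beta$ is $O(1)$; your $\psi(tH)$ is exactly of this form (take $\theta=2^{-k}$ and time parameter $t2^k\simeq 1$), so no separate multiplier theorem --- whose validity for rough Kato-class $V$ would itself require justification --- is needed.

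Part (b) is where you genuinely diverge from the paper, and two points deserve correction. The paper proves (b) by splitting in time: for $t>1$ it combines Lemma \ref{l:disp-RS} with the embedding $B^{2\beta,1}_p(H)\hr L^p$, and for $0<t\le 1$ it quotes part (a) together with the Besov--Sobolev embedding $B^{2\beta,1}_{p}(H)\hr B^{\beta,1}_{p'}(H)$; it is precisely this embedding --- a comparison of perturbed Besov spaces with different integrability indices --- that uses the hypothesis $|\pa_x^\al V|\le c_\al$, $|\al|\le 2n$. Your claim that this hypothesis is ``used only to bring in the Rodnianski--Schlag dispersive estimate'' is therefore a misattribution: Lemma \ref{l:disp-RS} needs only $\Vert V\Vert_K<4\pi$ and (\ref{e:rollnik-4pi-square}), which are already assumed in part (a). Your alternative route --- the frequency-localized dispersive bound $\Vert e^{-itH}\vphi_j(H)\Vert_{L^p\to L^{p'}}\lesssim\min(2^{j\beta},t^{-\beta})$ for all $t>0$, followed by resummation via $\min(2^{j\beta},t^{-\beta})\lesssim\la t\ra^{-\beta}2^{2j\beta}$ --- is sound, but its one nontrivial ingredient, the bound $\Vert e^{-itH}\vphi_j(H)\Vert_{L^1\to L^\iy}\lesssim 2^{jn/2}$ for $t2^j\lesssim 1$, is asserted rather than proved. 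It does hold: factor $e^{-itH}\vphi_j(H)=g_j(2^{-j}H)\,e^{-2^{-j}H}$ with $g_j(x)=e^{-it2^jx}e^{x}\vphi_j(2^jx)$, a family with uniform $C_0^\iy$ bounds when $t2^j\lesssim 1$, hence uniformly bounded on $L^\iy$ by Lemma \ref{l:phi-et-beta} (with $t=0$, $\theta=2^{-j}$), and then use the short-time heat kernel bound $\Vert e^{-2^{-j}H}\Vert_{L^1\to L^\iy}\lesssim 2^{jn/2}$ valid for Kato-class potentials. If you carry this step out, your argument in fact establishes (\ref{e:u-B-Lp}) without any smoothness assumption on $V$, i.e.\ a statement stronger than the paper's part (b); that is what your route buys, at the price of the kernel-localization work above, whereas the paper's route avoids that work by leaning on the embedding theorem it cites, at the price of the stronger hypothesis on $V$.
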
 
\begin{remark} The short time estimate in (\ref{e:e-Lp-tB-short-time}) is an improvement upon 
 \cite{JN94}
since we only demand smoothness order being $\beta$ rather than $2\beta$. 
\end{remark}
It is well known that if $V$ satisfies (\ref{e:rollnik-4pi-square}), then $\sigma(H)=\sigma(H_{ac})=[0,\iy)$.
Note that by Hardy-Littlewood-Sobolev inequality, 
$V\in L^{3/2}$ implies the finiteness of the L.H.S of (\ref{e:rollnik-4pi-square}).  Moreover, 
$V\in L^{3/2+}\cap L^{3/2-}$ implies $\Vert V\Vert_K<\iy$
\cite[Lemma 4.3]{DP05}. 
In particular, if $\Vert V\Vert_{L^{3/2+}\cap L^{3/2-}}$ is
sufficiently small, then the conditions of Theorem \ref{th:global-V} (a) 
are satisfied. 

The proof of the main theorem is a careful modification of that of the one dimensional result for a special potential in \cite{OZ06}. 
For short time we obtain (\ref{e:e-Lp-tB-short-time}) by modifying the proof of \cite[Theorem 4.6]{JN94}. 
The long time estimates simply follows from the $L^p\to L^{p'}$ estimates for $e^{-itH} $, 
$1\le p\le 2$, a result of \cite[Theorem 2.6]{RS04}, 
and the embedding $B^{\eps,q}_p(H)\hr L^{p}$, $\eps>0$, $1\le p,q\le \iy$.



Note that from the definitions of $B(H)$ and $F(H)$ spaces we have
\begin{align}\label{e:B-F-B-emb}
B^{\al,\min(p,q)}_p(H)\hr  F^{\al,q}_p(H)\hr B^{\al,\max(p,q)}_p(H) 
\end{align}
for $1\le p<\iy$, $1\le q\le\infty$, where $\hr$ means continuous embedding. 


\section{Proof of Theorem \ref{th:global-V}} 

The following lemma is proved in \cite[Theorem 2, Remark 2.2]{JN94}.
\begin{lemma}\label{l:phi-et-beta} (\cite{JN94}) Let $1\le p\le \iy$. Suppose $V\in K_n$, $n=3$ and $\phi\in C^\infty_0(\R)$. Then 
there exists a constant $c>0$ independent of $\theta\in(0, 1]$ so that
\begin{equation*}
\Vert \phi(\theta H)e^{-it\theta H} f\Vert_p\le c\la t\ra^\beta\Vert f\Vert_p\, . 
\end{equation*}
\end{lemma}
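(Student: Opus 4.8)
The plan is to deduce the bound by interpolating the trivial $L^2$ estimate against a sharp $L^1$ (equivalently $L^\iy$) endpoint. Since $H$ is self-adjoint, the spectral theorem gives $\Vert\phi(\theta H)e^{-it\theta H}\Vert_{2\to 2}\le\Vert\phi\Vert_\iy$ uniformly in $t,\theta$, which already matches $\la t\ra^\beta$ at $p=2$ because $\beta(2)=0$. Hence it suffices to establish
\begin{equation*}
\Vert\phi(\theta H)e^{-it\theta H}\Vert_{1\to 1}\le c\,\la t\ra^{n/2}\qquad\text{uniformly in }\theta\in(0,1].
\end{equation*}
The $L^\iy$ endpoint is then dual to this, since the adjoint of $\phi(\theta H)e^{-it\theta H}$ is $\overline\phi(\theta H)e^{it\theta H}$, of the same form with $(\phi,t)\mapsto(\overline\phi,-t)$ and $\la t\ra=\la -t\ra$. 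Riesz--Thorin interpolation with $\tfrac1p=1-\tfrac\lambda2$ then yields the exponent $(n/2)\cdot 2(\tfrac1p-\tfrac12)=n(\tfrac1p-\tfrac12)=\beta$ for $1\le p\le 2$, and symmetrically for $2\le p\le\iy$. This is also why the endpoint power must be the \emph{sharp} value $n/2$ and not $n/2+\eps$: interpolation transports the loss linearly.

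For the endpoint I would estimate $\sup_y\int|K_{\theta,t}(x,y)|\,dx$ for the integral kernel $K_{\theta,t}$ of $F_t(\theta H)$, where $F_t(s)=\phi(s)e^{-its}$. The uniformity in $\theta$ is a scaling phenomenon: under $H\mapsto\theta H$ the parabolic scaling matches that of the heat kernel and leaves the $L^1$-in-$x$ norm invariant, as is transparent in the free model $H_0=-\De$, where $F_t(\theta H_0)$ is the Fourier multiplier $\phi(\theta|\xi|^2)e^{-it\theta|\xi|^2}$ and the substitution $\xi\mapsto\theta^{-1/2}\xi$ removes $\theta$ entirely. So the problem reduces to bounding the $\theta=1$ kernel by $\la t\ra^{n/2}$. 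The structural input for a Kato-class $V$ is the Gaussian heat-kernel bound $|e^{-sH}(x,y)|\le C\,e^{\om s}s^{-n/2}e^{-c|x-y|^2/s}$ (\cite{Si82}); the factor $e^{\om s}$ coming from possible negative spectrum is harmless, because $\phi$ has compact support and one may insert an energy cutoff $\chi(H)$ with $\chi\equiv1$ on $\supp\phi$, restricting to a bounded spectral window (the finitely many bound states contribute a $t$-independent finite-rank piece that is trivially $L^1$-bounded).

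The growth $\la t\ra^{n/2}$ is the delicate point. In the free model it is produced by stationary phase: the kernel of $\phi(H_0)e^{-itH_0}$ has size $\sim t^{-n/2}$ on the region $|x-y|\lesssim t$, where the phase $-t|\xi|^2+(x-y)\cdot\xi$ has a critical point inside $\supp\phi(|\cdot|^2)$, and decays rapidly outside, so that $\int|K|\,dx\sim t^{-n/2}\cdot t^{\,n}=t^{n/2}$, with the obvious $O(1)$ bound for $0<t\le1$. To make this robust under the perturbation I would write $F_t(H)=G_t(\sqrt H)$ with the even function $G_t(\mu)=\phi(\mu^2)e^{-it\mu^2}$, expand $G_t(\sqrt H)=\tfrac1\pi\int_0^\iy\widehat{G_t}(s)\cos(s\sqrt H)\,ds$, and exploit the finite propagation speed of the wave group: the kernel of $\cos(s\sqrt H)$ is supported in $\{|x-y|\le s\}$, a property that survives the zeroth-order perturbation $V$. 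Since $\widehat{G_t}(s)$ is concentrated in $|s|\lesssim t$ (again by stationary phase, as $\widehat\phi$ is Schwartz), this confines the kernel of $F_t(H)$ to the light cone $|x-y|\lesssim t$ and, together with the off-diagonal $L^2$ bounds furnished by the Gaussian heat estimate, reproduces the sharp size $t^{-n/2}$ there.

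The main obstacle is exactly to extract the sharp exponent $n/2$, and not a lossy $n/2+\eps$, in the presence of $V$ and uniformly in $\theta$. A crude submultiplicative splitting $F_t(\theta H)=\psi(\theta H)\,e^{-(1+it)\theta H}$ with $\psi=\phi\,e^{(\cdot)}\in C_0^\iy$, combined with the complex-time heat bound, only yields $\la t\ra^{3n/2}$, since it discards the cancellation between the spectral localization and the dispersive spreading; likewise a generic Sobolev/H\"ormander-type multiplier estimate costs an $\eps$ in the exponent. The finite-propagation argument above is the route by which I expect the exact power to be recovered, and carrying it out with uniform control in $\theta$ is the technical heart of the matter. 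This is the content of \cite[Theorem 2, Remark 2.2]{JN94}, whose proof I would follow.
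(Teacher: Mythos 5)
You should first note what the paper actually does here: it gives no proof of its own, but quotes the lemma from \cite[Theorem 2, Remark 2.2]{JN94}, which is precisely the citation your last paragraph falls back on; so at the level of what certifies the statement, your proposal and the paper coincide. Your reductions are also sound and consistent with the paper's accompanying remark that a ``simple proof'' can be based on the short-time Gaussian upper bound for the heat kernel: the $L^2$ case is the spectral theorem, the range $2\le p\le\iy$ follows by duality, and Riesz--Thorin correctly converts an $L^1$ endpoint of size $\la t\ra^{n/2}$ into the exponent $\beta(p)$.

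Two concrete points in your sketch are, however, wrong or misleading. First, the ``energy cutoff / finitely many bound states'' justification for discarding the factor $e^{\om s}$ fails under the hypothesis of the lemma: $V\in K_3$ alone (no smallness, no decay) allows negative \emph{essential} spectrum --- e.g.\ $V\equiv-1$ lies in $K_3$ and $\sigma(H)=[-1,\iy)$ --- so there is no finite-rank piece to split off; moreover $\phi(\theta H)$ localizes $H$ to $|\lam|\lesssim\theta^{-1}$, which is not a bounded spectral window uniformly in $\theta$. The factor is harmless for a different reason: by scaling, every semigroup factor entering the argument has real time at most $\theta\le1$, so only short-time bounds, where the Kato-class Gaussian estimate is uniform, are ever used. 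Second, your claim that the splitting $\phi(\theta H)e^{-it\theta H}=\psi(\theta H)\,e^{-(1+it)\theta H}$ ``only yields $\la t\ra^{3n/2}$'' is a statement about a lossy input, not about the splitting itself: with the sharp complex-time bound $\Vert e^{-z\theta H}\Vert_{1\to1}\lesssim(|z|/\mathrm{Re}\,z)^{n/2}$ it gives exactly $\la t\ra^{n/2}$, and the proof in \cite{JN94} that you say you would follow is in essence a Fourier-refined version of this very splitting: writing $g(\lam)=\phi(\lam)e^{\lam}$,
\begin{equation*}
\phi(\theta H)e^{-it\theta H}=\frac{1}{2\pi}\int_{\R}\hat g(s)\,e^{-(1-i(s-t))\theta H}\,ds ,
\end{equation*}
so that the lemma reduces to $\Vert e^{-(1-i\sigma)\theta H}\Vert_{p\to p}\lesssim\la\sigma\ra^{\beta}$ uniformly in $\theta\in(0,1]$, which is proved from short-time heat kernel bounds --- the heat-semigroup route the paper's remark alludes to, not the wave-equation route. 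Your finite-propagation-speed scheme is a legitimate alternative mechanism in the style of \cite{CouS06} (though it needs the standard fix that the kernel of $\cos(s\sqrt H)$ is only a distribution, so one must mollify with $e^{-\theta H}$ before applying Cauchy--Schwarz on the light cone); but since neither you nor the paper carries the endpoint estimate out in detail, in both cases the heavy lifting is done by the citation to \cite{JN94}.
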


\begin{remark} We can also give a simple proof of this lemma based on the fact that the heat kernel of $H$ satisfies an upper Gaussian
bound in short time.  The interested reader is referred to \cite{Z06a} and \cite{JN94,Si82}.
\end{remark}

The long time decay has been studied quite extensively under a variety of conditions on $V$
\cite{JSS,RS04,Sch05a,V06a,Y95}. 
The following $L^p\to L^{p'}$ estimates follow via interpolation 
between the $L^2$ conservation 
and the $L^1\to L^\infty$ estimate  for $e^{-itH}$ 
that was proved in \cite[Theorem 2.6]{RS04}.
\begin{lemma}\label{l:disp-RS} Let $1\le p\le 2$.  Suppose $\Vert V\Vert_K<4\pi$ and
\begin{align*}
\int_{\R^6} \frac{|V(x)| \,|V(y) |}{|x-y|^2}dxdy <(4\pi)^2 . \end{align*}
 Then $ \Vert e^{-itH}f\Vert_{L^{p'}} \lesssim  |t|^{-n(\frac{1}{p}-\frac{1}{2})} \Vert f\Vert_{L^p} $.
\end{lemma}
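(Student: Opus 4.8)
The plan is to derive the bound by interpolation between the two endpoint cases $p=2$ and $p=1$, exactly as announced just before the statement; the substantive analytic input, the $L^1\to L^\infty$ dispersive decay, is imported wholesale from \cite{RS04}. First I would record the two endpoints. Under the hypotheses $\Vert V\Vert_K<4\pi$ and the Rollnik-type bound, the operator $H=-\De+V$ is a well-defined self-adjoint operator: the smallness of $\Vert V\Vert_K$ lets one control the negative part of $V$ as an infinitesimally form-bounded perturbation of $-\De$ (a KLMN/quadratic-form argument), so $e^{-itH}$ is a strongly continuous unitary group on $L^2(\R^3)$ and we have the conservation law
\[
\Vert e^{-itH}f\Vert_{L^2}=\Vert f\Vert_{L^2}.
\]
This is the $p=2$ endpoint, with time-decay exponent $0=-n(\tfrac12-\tfrac12)$. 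The $p=1$ endpoint is precisely \cite[Theorem 2.6]{RS04}, which asserts, under the same two conditions on $V$, that $\Vert e^{-itH}f\Vert_{L^\infty}\lesssim |t|^{-n/2}\Vert f\Vert_{L^1}$ with $n=3$.

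With both endpoints in hand I would apply the Riesz--Thorin interpolation theorem to the (fixed-$t$) operator $e^{-itH}$, interpolating between the mappings $L^2\to L^2$ and $L^1\to L^\infty$. Choosing the interpolation parameter $\theta$ by $\tfrac1p=\tfrac{1-\theta}{2}+\theta$ yields $\theta=\tfrac2p-1\in[0,1]$ for $1\le p\le 2$, and the conjugate relation $\tfrac1{p'}=\tfrac{1-\theta}{2}$ confirms that $L^{p'}$ is the correct target space. The operator norm then interpolates multiplicatively as
\[
1^{\,1-\theta}\,\big(|t|^{-n/2}\big)^{\theta}=|t|^{-n\theta/2}=|t|^{-n(\frac1p-\frac12)},
\]
which is exactly the claimed estimate.

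Because the genuinely difficult ingredient is quoted from \cite{RS04} and the $L^2$ bound is automatic from self-adjointness, this lemma presents no real obstacle beyond bookkeeping. The two points that nonetheless deserve care are (i) confirming that the hypotheses $\Vert V\Vert_K<4\pi$ and $\int_{\R^6}|V(x)|\,|V(y)|/|x-y|^2\,dx\,dy<(4\pi)^2$ are precisely those under which \cite[Theorem 2.6]{RS04} delivers the $|t|^{-n/2}$ decay, and (ii) checking the arithmetic of the time exponent, since $n(\tfrac1p-\tfrac12)$ and the more familiar-looking $\tfrac n2(\tfrac1p-\tfrac1{p'})$ coincide by virtue of $\tfrac1p-\tfrac1{p'}=2(\tfrac1p-\tfrac12)$.
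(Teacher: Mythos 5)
Your proposal is correct and is essentially the paper's own argument: the paper derives this lemma by Riesz--Thorin interpolation between $L^2$ conservation (unitarity of $e^{-itH}$) and the $L^1\to L^\infty$ dispersive bound of \cite[Theorem 2.6]{RS04}, exactly as you do, and your exponent bookkeeping $\theta=\tfrac{2}{p}-1$, $|t|^{-n\theta/2}=|t|^{-n(\frac1p-\frac12)}$ checks out.
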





\subsection{Proof of Theorem \ref{th:global-V}} (a) Let $0<t\le 1$.  Let $\{\varphi_j\}_{j=0}^\iy$ 
be a smooth dyadic system as given in Section 1. 
For $f\in \cal{S}$ we write
\begin{align}\label{e:eitHf-sum}
e^{-itH}f =\sum_{2^jt\le 1} \vphi_j(H)e^{-itH}f  + \sum_{2^j t>1} \vphi_j(H)e^{-itH}f \,.
\end{align}
According to Lemma \ref{l:phi-et-beta}, if $j\ge j_t:=[-\log_2 t]+1$,
\begin{align*}
 \Vert\vphi_j(H)e^{-itH}f\Vert_{p'} \le c \la t2^{j}\ra^\beta \Vert {\vphi}_j (H) f \Vert_{p'}
\end{align*}
where we noted that $\vphi_j(H)=\psi_j(H)\vphi_j(H)$, $\psi_j=\psi(2^{-j}x)$  
if taking $\psi\in C^\iy_0$ so that $\psi(x)\equiv 1$ on $[-1,-\frac14]\cup [\frac{1}{4},1]$. 
It follows that
\begin{align*}
 &\sum_{2^j t>1} \Vert \vphi_j(H)e^{-itH}f \Vert_{p'}
\le c \, t^\beta \sum_{2^j t>1}  2^{j\beta} \Vert  {\vphi}_j (H) f \Vert_{p'} \,.
\end{align*}
For the first term in the R.H.S. of (\ref{e:eitHf-sum}), 
 similarly we have by applying Lemma \ref{l:phi-et-beta} again, 
\begin{align*}
 \Vert\sum_{2^jt\le 1}\vphi_j(H)e^{-itH}f\Vert_{p'} \le c \la t2^{j_t}\ra^\beta \Vert {\eta} (2^{-j}H) f \Vert_{p'}\le c \Vert f \Vert_{p'}
\end{align*}
where we take $\eta\in C^\iy_0$ with $\eta(x)\equiv 1$ on $[-1,1]$ 
so that $\eta(2^{-j_t}H)\sum_{2^jt\le1}\vphi_j(H)=\sum_{2^jt\le 1}\vphi_j(H)$. 
Therefore we obtain that if  $0<t\le1$,
\begin{align*}
\Vert e^{-itH}f \Vert_{p'}\lesssim
\Vert f\Vert_{p'}  +  t^\beta \Vert f \Vert_{B^{\beta,1}_{p'}(H)} \,, \end{align*}
which proves part (a).  

\vs{.071in}
\nd
(b) Inequality (\ref{e:u-B-Lp}) holds for $t>1$  
in virtue of Lemma \ref{l:disp-RS} and the remarks below Theorem \ref{th:global-V}.
For $0<t \le 1$, (\ref{e:u-B-Lp}) follows from the Besov embedding $B^{2\beta,1}_{p}(H)\hr B^{\beta,1}_{p'}(H)$,
which is valid because of the condition $|\pa_x^\al V(x)|\le c_\al$, $|\al|\le 2n$; cf. e.g. \cite{Tr83,Z06a}. 
\hB

\begin{remark}  
It seems from the proof that the smoothness order $2\beta$ in (\ref{e:u-B-Lp}) is optimal for the initial data $f$.
\end{remark}

\begin{remark}\label{r:u-B-2-Lp'} If working a little harder, we can show that
\begin{equation}\label{e:u-B-2-Lp}
\Vert  e^{-itH}f\Vert_{L^{p'}} \lesssim \la t \ra^{-n(\frac{1}{p}-\frac{1}{2})} \Vert  f\Vert_{B^{2\beta,2}_p(H)} ,
\end{equation}
 if assuming the upper Gaussian bound for the gradient of heat kernel of $H$ in short time, in addition to the conditions in Theorem \ref{th:global-V} (a).
The proof of (\ref{e:u-B-2-Lp}) is based on the embedding $B_{p'}^{0,2}(H)\hr F_{p'}^{0,2}(H)= L^{p'}$, $p'\ge 2$ which follows from a deeper 
result by applying the gradient estimates for $e^{-tH}$; see \cite{Z06a} and \cite{CouS06}. 
\end{remark}

\begin{corollary}\label{c:e{-itH}-B-F} Let $ 1\le p\le2$, $\al\in\R$ and $\beta=\beta(p)$. Suppose $V$ satisfies the
same conditions as in Theorem \ref{th:global-V} (b). The following estimates hold. \n
a) If $1\le q\leq \infty$, then
\begin{equation}\label{e:e-B-B}
\Vert e^{-itH}f\Vert_{B_p^{\alpha,q}(H)}
\lesssim \langle t\rangle^{-n(\frac{1}{p}-\frac{1}{2})} \Vert f \Vert_{B_p^{\alpha+2\beta,q}(H)}\,.
\end{equation}
b) If $1\le q\le p$, then
\begin{equation}\label{e:e-B-F}
\Vert e^{-itH}f\Vert_{F_p^{\alpha,q}(H)}
\lesssim \langle t\rangle^{-n(\frac{1}{p}-\frac{1}{2})} \Vert f \Vert_{B_p^{\alpha+2\beta,q}(H)}\,.
\end{equation}
\end{corollary}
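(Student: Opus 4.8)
The plan is to deduce both displays from Theorem \ref{th:global-V}(b)---equivalently, from the long-time bound of Lemma \ref{l:disp-RS} together with the short-time bound of Lemma \ref{l:phi-et-beta}---by working one spectral annulus at a time, and then to obtain part (b) from part (a) by the elementary embedding (\ref{e:B-F-B-emb}). The unifying observation is that $\vphi_j(H)$ and $e^{-itH}$ are both Borel functions of the single self-adjoint operator $H$, so they commute:
$$ \vphi_j(H)e^{-itH}f=e^{-itH}\vphi_j(H)f . $$
Hence $e^{-itH}$ acts block-diagonally: it preserves each spectral annulus $\{2^{j-2}\le\lambda\le 2^j\}$, the $\ell^q$ summation index is untouched, and the Besov norm on the left decomposes into the contributions of the individual pieces $e^{-itH}\vphi_j(H)f$, each again localized to the $j$-th annulus.

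For part (a) I would, for each fixed $j$, estimate $\Vert\vphi_j(H)e^{-itH}f\Vert_{L^{p}}$ and then reassemble the weighted norms in $\ell^q$. Writing $\vphi_j(H)=\psi_j(H)\vphi_j(H)$ with the fattened cutoff $\psi_j=\psi(2^{-j}\cdot)$ from the proof of Theorem \ref{th:global-V}(a), I would separate the regimes $2^jt\le 1$ and $2^jt>1$ exactly as in (\ref{e:eitHf-sum}). In the regime $2^jt>1$ the decay is supplied by applying the dispersive input to the single block $\vphi_j(H)f$: because this block is spectrally localized, its $B^{2\beta,1}_p(H)$-norm is comparable to $2^{2j\beta}\Vert\vphi_j(H)f\Vert$ (up to the fattened neighbors), so the output carries the factor $\la t\ra^{-n(1/p-1/2)}2^{2j\beta}$. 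In the regime $2^jt\le 1$ the operator is essentially the identity on that block and Lemma \ref{l:phi-et-beta} applies. In both regimes the frequency weight $2^{2j\beta}$ is intended to be absorbed precisely by the shift from the smoothness index $\alpha$ on the left to $\alpha+2\beta$ on the right, while the time factors assemble into $\la t\ra^{-n(1/p-1/2)}$; summing in $\ell^q$ then yields (\ref{e:e-B-B}) for every $1\le q\le\iy$.

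For part (b) I would interpose the Besov space between the $F$-norm and part (a). By the left-hand embedding in (\ref{e:B-F-B-emb}) one has $B_p^{\al,\min(p,q)}(H)\hr F_p^{\al,q}(H)$, so
$$ \Vert e^{-itH}f\Vert_{F_p^{\al,q}(H)}\lesssim \Vert e^{-itH}f\Vert_{B_p^{\al,\min(p,q)}(H)} . $$
Under the hypothesis $1\le q\le p$ we have $\min(p,q)=q$, so the right-hand side is $\Vert e^{-itH}f\Vert_{B_p^{\al,q}(H)}$ and (\ref{e:e-B-F}) follows at once from (\ref{e:e-B-B}). This is exactly why the $F$-statement is restricted to $q\le p$: it is the range in which the inner index produced by the embedding coincides with the given $q$.

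The step I expect to be the genuine obstacle is the per-block estimate in part (a): controlling $\Vert\vphi_j(H)e^{-itH}f\Vert_{L^{p}}$ by $\la t\ra^{-n(1/p-1/2)}2^{2j\beta}\Vert\vphi_j(H)f\Vert_{L^{p}}$, with the integrability index on the left equal to that on the right. The dispersive input from Theorem \ref{th:global-V}(b) and Lemma \ref{l:disp-RS} is naturally an $L^{p}\to L^{p'}$ bound, so reconciling the genuine decay with an $L^{p}$-valued left-hand norm is the delicate point; I would attempt it by combining the $L^2$ conservation of $e^{-itH}$ with the dispersive decay and exploiting the spectral localization of the block through a Bernstein-type inequality for $\vphi_j(H)$, taking care that the frequency weights so produced remain within the $2\beta$ derivative budget. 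Verifying uniformity of the implied constants in $j$ and $t$, and checking that the Kato-class perturbation $V$ does not degrade the block-localized bounds beyond the control already used in Lemmas \ref{l:phi-et-beta} and \ref{l:disp-RS}, are the remaining technical points.
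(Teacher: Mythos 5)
Your overall scheme (blockwise estimate, reassemble in $\ell^q$, then deduce (b) from (a) via the embedding $B_p^{\al,\min(p,q)}(H)\hr F_p^{\al,q}(H)$ with $\min(p,q)=q$) is the paper's scheme, and your part (b) is verbatim the paper's. But the per-block estimate you yourself single out as ``the genuine obstacle'' is a genuine gap, not merely a delicate point: the bound
\begin{equation*}
\Vert \vphi_j(H)e^{-itH}f\Vert_{L^{p}}\lesssim \la t\ra^{-n(\frac1p-\frac12)}\,2^{2j\beta}\,\Vert \vphi_j(H)f\Vert_{L^{p}}
\end{equation*}
with the \emph{same} exponent $p<2$ on both sides is false, and none of the tools you propose can repair it. Bernstein-type inequalities for $\vphi_j(H)$ only raise the Lebesgue exponent, never lower it, so they cannot convert the natural $L^p\to L^{p'}$ dispersive output back into an $L^p$-valued left-hand side; and decay of the $L^p$ norm for $p<2$ is incompatible with $L^2$ conservation: by H\"older, $\Vert u(t)\Vert_{2}\le \Vert u(t)\Vert_{p}^{1/2}\Vert u(t)\Vert_{p'}^{1/2}$, so if $\Vert u(t)\Vert_{p'}$ decays while $\Vert u(t)\Vert_2=\Vert f\Vert_2$ is conserved, then $\Vert u(t)\Vert_p$ necessarily \emph{grows} in $t$. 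The decay must land in $L^{p'}$. Indeed this is what the paper's own proof produces: its displayed estimate reads $\Vert \vphi_j(H)e^{-itH}f\Vert_{L^{p'}}\lesssim \la t\ra^{-n(\frac1p-\frac12)}2^{2\beta j}\Vert \vphi_j(H)f\Vert_{L^p}$, so the left-hand spaces in the corollary are to be understood as $B_{p'}^{\al,q}(H)$ and $F_{p'}^{\al,q}(H)$ (the statement as printed, with $p$ on the left, appears to be a typo, trivial only when $p=2$). With this reading your embedding step for (b) survives unchanged, since $q\le p\le 2\le p'$ gives $\min(p',q)=q$.

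Separately, your plan for part (a) is far heavier than needed: you propose to re-run the $2^jt\le1$ versus $2^jt>1$ dichotomy of Theorem \ref{th:global-V}(a) on each block, whereas the paper treats Theorem \ref{th:global-V}(b) as a black box and simply substitutes $\vphi_j(H)f$ for $f$ in (\ref{e:u-B-Lp}). The only remaining ingredients are the almost-orthogonality $\vphi_k(H)\vphi_j(H)=0$ unless $|k-j|\le 2$ (from the support condition on the dyadic system) together with the uniform $L^p$ bound $\Vert\vphi_k(H)g\Vert_p\le c\Vert g\Vert_p$, obtained from Lemma \ref{l:phi-et-beta} with $\theta=2^{-k}$ and $t=0$; these give $\Vert \vphi_j(H)f\Vert_{B^{2\beta,1}_p(H)}\approx 2^{2\beta j}\Vert\vphi_j(H)f\Vert_{L^p}$, exactly the comparability you assert ``up to fattened neighbors.'' Multiplying by $2^{j\al}$ and taking $\ell^q$ norms then finishes (a). Your commutation observation $\vphi_j(H)e^{-itH}=e^{-itH}\vphi_j(H)$ is correct and is implicit in this substitution, but once (\ref{e:u-B-Lp}) is available there is nothing left to decompose in time, and the uniformity in $j$ that worries you is supplied entirely by Lemma \ref{l:phi-et-beta}.
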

\begin{proof} Substituting $\vphi_j(H)f$ for $f$ in (\ref{e:u-B-Lp}) we obtain
\begin{align*}
&\Vert  \vphi_j(H)e^{-itH}f\Vert_{L^{p'}} \lesssim \la t \ra^{-n(\frac{1}{p}-\frac{1}{2})} \Vert  \vphi_j(H)f\Vert_{B^{2\beta,1}_p(H)}\\
\approx& \la t \ra^{-n(\frac{1}{p}-\frac{1}{2})}\, 2^{2\beta j}\Vert \vphi_j(H)f\Vert_{L^p}
\end{align*}
where we used $\Vert \vphi_j(H)g\Vert_p\le c\Vert g\Vert_p$ by applying Lemma \ref{l:phi-et-beta} with $\theta=2^{-j}$ and $t=0$. 
Now multiplying $2^{j\al}$ and taking $\ell^q$ norms in the above inequality gives (\ref{e:e-B-B}). 
The estimate in (\ref{e:e-B-F}) follows from the embedding $B_p^{\alpha,q}(H)\hr F_p^{\alpha,q}(H)$ if $q\le p$, according to (\ref{e:B-F-B-emb}). 
\end{proof}

\end{document}